\newtheorem{theorem}{Theorem}
\newtheorem{lemma}{Lemma}
\newtheorem{rem}{Remark}
\newcommand{\e}{\Bbb{E}}
\title{
\textbf{A direct method for solving optimal stopping problems for L\'evy processes}}
\author{\textbf{E.J. Baurdoux\footnote{Department of Statistics, London School of Economics. Houghton street, {\sc London, WC2A 2AE, United Kingdom.} E-mail: e.j.baurdoux@lse.ac.uk}}}
\date{\footnotesize This version: \today}
\begin{document}

\maketitle

\begin{abstract}
\bigskip
\noindent
We propose an alternative approach for solving a number of well-studied optimal stopping problems for L\'evy processes. Instead of the usual method of guess-and-verify based on martingale properties of the value function, we suggest a more direct method by showing that the general theory of optimal stopping for strong Markov processes together with some elementary observations imply that the stopping set must be of a certain form for the optimal stopping problems we consider. The independence of increments and the strong Markov property of L\'evy processes then allow us to use straightforward optimisation over a real-valued parameter to determine this stopping set. We illustrate this approach by applying it to the McKean optimal stopping problem (American put),  the Novikov--Shiryaev optimal stopping problem and the Shepp--Shiryaev optimal stopping problem (Russian option).
\bigskip

\noindent {\sc Key words}:  optimal stopping, L\'evy processes.\\
\noindent MSC 2000 subject classification: 60J75, 60G40, 91B70.
\end{abstract}

\vspace{0.5cm}
\section{Introduction}
L\'evy processes have stationary and independent increments and they satisfy the strong Markov property. In this paper we shall not make use of any further properties of L\'evy processes and instead refer the interested reader to the monographs \cite{Be} and \cite{ky}.
 L\'evy processes form a surprisingly rich class with applications in a wide variety of fields including biology, insurance and mathematical finance. In the latter, L\'evy processes have also been popular for studying optimal stopping problems.
 We consider optimal stopping problems of the form
 \[V(x)=\sup_{\tau\in\mathcal{T}}\mathbb{E}_x[e^{-qt} G(X_t)].\]
 Here $G$ is a given function called the pay-off function, $q>0$ is to be thought of as the discount rate and $X$ is a L\'evy process defined on a filtered probability space $(\Omega, \mathcal{F},(\mathcal{F}_t)_{t\geq 0},\mathbb{P}_x)$ satisfying the natural conditions (see \cite{bich}), with $\mathbb{P}_x(X_0=x)=1$ with $\mathbb{E}_x$ denoting the related expectation operator. Finally, $\mathcal{T}$ denotes the set of all $[0,\infty]$-valued stopping times with respect to $(\mathcal{F}_t)_{t\geq 0}$. To exclude trivialities we assume that the paths of $X$ are not monotone.

 For Brownian motion, and, more generally, diffusions, optimal stopping problems are often solved by finding a solution to the related free-boundary problem (see for example p.48-49 in \cite{PS}). For a general L\'evy process, this method might not always be feasible as the infinitesimal generator is now an integro-differential operator, and, unless further assumptions are made on the jump distribution (see for example \cite{Gapeev}) in most cases one resorts to a so-called verification lemma of the following form (with an integrability assumption in case $G$ is unbounded).
Suppose that $\tau^*\in\mathcal{T}$ and denote $V^*(x)=\mathbb{E}_x[e^{-q\tau^*}G(X_{\tau^*})]$. Then the pair $(\tau^*,V^*)$ is a solution to the optimal stopping problem if
$V^*(x)\geq G(x)$ for all $x\in\mathbb{R}$
and if
the process $\{e^{-qt} V^*(X_t)\}_{t\geq 0}$ is a right-continuous supermartingale.
The proof of such a verification lemma is straightforward, see for example Lemma 11.1 in \cite{ky}. However, in general, it is not obvious how $\tau^*$ should be chosen. For continuous processes, a method based on a change of measure was proposed in \cite{BL}, see also \cite{Lerche-Urusov}. This was extended to certain optimal stopping problems for L\'evy processes with one-sided jumps in \cite{Baurdoux}. However, this method hinges on the fact that the underlying process does not overshoot the boundary of the stopping region. We consider general L\'evy processes and the approach we propose boils down to the following elementary steps.
\begin{enumerate}
\item Based on general theory of optimal stopping together with some elementary observations, show that the stopping set can be parameterised by a real number, say $y$, and denote the corresponding stopping time $\tau(y)$.
\item Use the stationarity and independence of the underlying process to find an expression for $V(x,y)$, the expected payoff corresponding to $\tau(y)$ under $\mathbb{P}_x$.
    \item Maximise $V(x,y)$ over $y$ where we are free to choose a convenient value for $x$.
\end{enumerate}
We shall illustrate this method by solving three examples. Firstly, we consider the McKean optimal stopping problem (American put) with $G(x)=(K-e^x)^+$ with $K>0$. Secondly, we study the so-called Novikov--Shiryaev optimal stopping problem with $G(x)=(x^+)^n.$ Finally, we consider the Shepp--Shiryaev optimal stopping problem (Russian option).

\section{The McKean optimal stopping problem}
The value function of the McKean optimal stopping problem (or American put when $q$ is the risk less rate) is given by
\begin{equation}V(x)=\sup_{\tau\in\mathcal{T}}\mathbb{E}_x[e^{-q\tau}\max(K-e^{X_\tau},0)],\label{McKeanosp}\end{equation}
where $K>0$ strike price, $q>0$ discount rate and $X$ the underlying L\'evy process.
This optimal stopping problem was first solved in \cite{McKean} in the case when the underlying is a Brownian motion.
Denote $\underline{X}_t=\inf_{0\leq s\leq t}X_t$  and let $\mathbf{e}_q$ be an independent, exponentially distributed random variable with parameter $q$. Also, for $y\in\mathbb{R}$, denote by $\tau_{y^-}$ the first passage time of $X$ below $y$, i.e. $\tau_y^-=\inf\{t>0 : X_t<y\}$.
In \cite{Mordecki} the random walk proof from \cite{Darling} was extended to the case of a general L\'evy process. In \cite{aky} an alternative proof was given based on a verification lemma. The solution is as follows.
\begin{theorem}
An optimal stopping time for (\ref{McKeanosp}) is given by $\tau^-_{y^*}=\inf\{t>0: X_t <y^*\}$, with $\exp(y^*)= K\mathbb{E}[e^{ \underline{X}_{\mathbf{e}_q}}]$.
\end{theorem}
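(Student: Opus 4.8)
The plan is to follow the three-step programme set out in the introduction: first pin down the shape of the stopping region from general principles, then compute the payoff of the resulting one-parameter family of threshold rules, and finally optimise over the parameter. For Step 1 I would invoke the general theory of optimal stopping for a strong Markov process, which (under continuity of $G$ and the usual integrability) identifies $V$ as the least $q$-excessive majorant of $G$ and gives $\tau_D=\inf\{t\ge 0:X_t\in D\}$ as optimal, where $D=\{x:V(x)=G(x)\}$ is the stopping region. Two elementary observations constrain $D$. Since $G\ge 0$ and $G=0$ on $[\log K,\infty)$, taking $\tau=\infty$ gives $V\ge 0$; and for $x\ge\log K$ the non-monotonicity of the paths guarantees a positive probability of reaching a point below $\log K$ in finite time, so $V(x)>0=G(x)$ and hence $D\subseteq(-\infty,\log K)$.

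The substance of Step 1 is to show that $D$ is a down-set. Here I would use spatial homogeneity together with the elementary inequality
\[(K-e^{-\delta}c)^+\le e^{-\delta}(K-c)^+ +(1-e^{-\delta})K,\qquad c\ge 0,\ \delta>0,\]
which is checked by splitting into the cases $c\le K$ and $c>K$. Writing $\mathbb{P}_{x'}$ as a shift of $\mathbb{P}_0$ and putting $\delta=x-x'>0$ gives, for every $\tau$,
\[\mathbb{E}_{x'}[e^{-q\tau}(K-e^{X_\tau})^+]\le e^{-\delta}\,\mathbb{E}_x[e^{-q\tau}(K-e^{X_\tau})^+]+(1-e^{-\delta})K,\]
and taking the supremum over $\tau$ yields $V(x')\le e^{-\delta}V(x)+(1-e^{-\delta})K$. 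If $x\in D$ with $x<\log K$ then $V(x)=K-e^x$, so the right-hand side equals $K-e^{x'}$; since always $V(x')\ge(K-e^{x'})^+=K-e^{x'}$, we conclude $x'\in D$. Thus $D=(-\infty,y^*]$ for some $y^*\le\log K$, and it suffices to optimise over the threshold rules $\tau^-_y$.

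For Step 2, note that for $y\le\log K$ the put is in the money at $\tau^-_y$, so $V(x,y):=\mathbb{E}_x[e^{-q\tau^-_y}(K-e^{X_{\tau^-_y}})]$. Using $e^{-q\tau}=\mathbb{P}(\mathbf{e}_q>\tau\mid\mathcal{F}_\tau)$ I would rewrite this as $\mathbb{E}_x[(K-e^{X_{\tau^-_y}})\mathbf{1}_{\{\tau^-_y<\mathbf{e}_q\}}]$. The key identity comes from the strong Markov property at $\tau^-_y$ and the lack of memory of $\mathbf{e}_q$: conditionally on $\{\tau^-_y<\mathbf{e}_q\}$ the overall infimum factorises as $\underline{X}_{\mathbf{e}_q}=X_{\tau^-_y}+\underline{X}'_{\mathbf{e}_q}$ with $\underline{X}'$ an independent copy, giving
\[\mathbb{E}_x[e^{-q\tau^-_y}e^{\beta X_{\tau^-_y}}]=e^{\beta x}\,\frac{\mathbb{E}[e^{\beta\underline{X}_{\mathbf{e}_q}}\mathbf{1}_{\{\underline{X}_{\mathbf{e}_q}<y-x\}}]}{\mathbb{E}[e^{\beta\underline{X}_{\mathbf{e}_q}}]}.\]
Specialising to $\beta=0$ and $\beta=1$ and combining produces a closed form for $V(x,y)$ in terms of the law of $\underline{X}_{\mathbf{e}_q}$.

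Finally, for Step 3 I would fix an arbitrary $x$ and differentiate $V(x,y)$ in $y$. The density of $\underline{X}_{\mathbf{e}_q}$ factors out of the first-order condition, which reduces to $K=e^{y}/\mathbb{E}[e^{\underline{X}_{\mathbf{e}_q}}]$, that is $e^{y^*}=K\,\mathbb{E}[e^{\underline{X}_{\mathbf{e}_q}}]$, independently of $x$; the sign of the derivative (positive for $y<y^*$, negative for $y>y^*$) confirms a global maximum, and $\mathbb{E}[e^{\underline{X}_{\mathbf{e}_q}}]\le 1$ ensures $y^*\le\log K$, consistent with the first step. I expect the main obstacle to be Step 1 rather than the computation: justifying the threshold form of $D$ rigorously — the invocation of the general theory with the correct regularity and integrability hypotheses, the closedness of $D$, and the down-set argument — is the conceptual heart, after which Steps 2 and 3 are the promised straightforward optimisation. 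A secondary technical point is to justify the infimum factorisation and the differentiability step, the latter to be replaced by a direct monotonicity comparison should $\underline{X}_{\mathbf{e}_q}$ fail to admit a density.
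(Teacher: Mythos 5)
Your proposal is correct and follows the paper's own three-step programme, built on the same key identity (Lemma \ref{Markov}, which you re-derive via the strong Markov property and the lack of memory of $\mathbf{e}_q$ at $\tau_y^-$). You diverge genuinely in two places. For the threshold structure of $D$ the paper argues in one line that convexity of $c\mapsto(K-c)^+$ makes $V$ convex as a function of $e^x$, which together with $V(\log K)>0$ and $q>0$ forces $D=(-\infty,y^*]$; your comparison inequality $(K-e^{-\delta}c)^+\le e^{-\delta}(K-c)^++(1-e^{-\delta})K$ combined with spatial homogeneity is an explicit down-set argument in the same spirit as the Christensen--Irle--Novikov-style estimate the paper itself uses for the Novikov--Shiryaev problem in Section 3, and it buys a fully spelled-out proof where the paper's convexity claim is left terse. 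For the optimisation over $y$ the paper avoids differentiation altogether: having written the payoff (at a convenient starting point) as $\mathbb{E}\bigl[\bigl(K\mathbb{E}[e^{\underline{X}_{\mathbf{e}_q}}]-e^{x+\underline{X}_{\mathbf{e}_q}}\bigr)1_{\{\underline{X}_{\mathbf{e}_q}<y-x\}}\bigr]/\mathbb{E}[e^{\underline{X}_{\mathbf{e}_q}}]$, it simply chooses the indicator to equal $1$ exactly where the integrand is positive, which yields $e^{y^*}=K\mathbb{E}[e^{\underline{X}_{\mathbf{e}_q}}]$ with no density assumption on $\underline{X}_{\mathbf{e}_q}$; your first-order condition gives the same answer but, as you yourself note, requires the monotone-comparison fallback when no density exists, so the paper's selection argument is the cleaner route. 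These are local variations; the substance of your proof is sound.
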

To prove this we shall make use of the following result, which is (6.33) on p.176 of \cite{ky}. As the proof is straightforward we include it here for completeness.
\begin{lemma}\label{Markov}
For $q,\beta\geq 0$
\[\mathbb{E}_x\left[e^{-q \tau_y^-+\beta X_{\tau_y^-}}1_{\{\tau_y^-<\infty\}}\right]=e^{\beta x}\frac{\mathbb{E}[e^{\beta \underline{X}_{\mathbf{e}_q}}1_{\{-\underline{X}_{\mathbf{e}_q}>x-y\}}]}{\mathbb{E}[e^{\beta \underline{X}_{\mathbf{e}_q}}]}.\]
\end{lemma}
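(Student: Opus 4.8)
The plan is to introduce the independent exponential clock $\mathbf{e}_q$ and convert the discount factor $e^{-q\tau_y^-}$ into a probability, so that both sides become statements about the running infimum $\underline{X}_{\mathbf{e}_q}$. First I would record the elementary fact that for any stopping time $\tau$, independence and the lack-of-memory property of $\mathbf{e}_q$ give $\mathbb{P}(\mathbf{e}_q > \tau \mid \mathcal{F}_\tau) = e^{-q\tau}$ on $\{\tau < \infty\}$; applied to $\tau = \tau_y^-$ this turns the left-hand side into
\[\mathbb{E}_x\!\left[e^{-q\tau_y^- + \beta X_{\tau_y^-}} 1_{\{\tau_y^- < \infty\}}\right] = \mathbb{E}_x\!\left[e^{\beta X_{\tau_y^-}} 1_{\{\tau_y^- < \mathbf{e}_q\}}\right],\]
where the right-hand expectation now also integrates over $\mathbf{e}_q$. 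I would then note the pathwise equivalence $\{\tau_y^- < \mathbf{e}_q\} = \{\underline{X}_{\mathbf{e}_q} < y\}$, valid $\mathbb{P}_x$-a.s.\ since $\mathbf{e}_q$ has a density and hence $\mathbb{P}_x(\tau_y^- = \mathbf{e}_q) = 0$.

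Next I would exploit the strong Markov property at $\tau_y^-$ together with the lack of memory of $\mathbf{e}_q$. The key observation is that the path stays at or above $y$ before $\tau_y^-$ and lands at $X_{\tau_y^-} \le y$, so $\underline{X}_{\tau_y^-} = X_{\tau_y^-}$; hence on $\{\tau_y^- < \mathbf{e}_q\}$ the global infimum splits as $\underline{X}_{\mathbf{e}_q} = X_{\tau_y^-} + I$, where $I$ is the infimum of the post-$\tau_y^-$ process (recentred to start at $0$) over the residual time $\mathbf{e}_q - \tau_y^-$. By the strong Markov property this post-passage process is an independent copy of $X$ started from $0$, and by the lack-of-memory property the residual time is, conditionally on $\{\tau_y^- < \mathbf{e}_q\}$, again exponential with parameter $q$ and independent of $\mathcal{F}_{\tau_y^-}$; thus $I$ is independent of $\mathcal{F}_{\tau_y^-}$ and of the event $\{\tau_y^- < \mathbf{e}_q\}$, with the law of $\underline{X}_{\mathbf{e}_q}$ under $\mathbb{P} = \mathbb{P}_0$. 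Multiplying the preceding decomposition by $e^{\beta I}$ and taking expectations, this factorisation gives
\[\mathbb{E}_x\!\left[e^{\beta \underline{X}_{\mathbf{e}_q}} 1_{\{\tau_y^- < \mathbf{e}_q\}}\right] = \mathbb{E}_x\!\left[e^{\beta X_{\tau_y^-}} 1_{\{\tau_y^- < \mathbf{e}_q\}}\right] \cdot \mathbb{E}\!\left[e^{\beta \underline{X}_{\mathbf{e}_q}}\right].\]

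Finally I would rearrange and translate back to the origin. Dividing the last identity by $\mathbb{E}[e^{\beta\underline{X}_{\mathbf{e}_q}}]$ and combining it with the first step identifies the left-hand side of the lemma with $\mathbb{E}_x[e^{\beta\underline{X}_{\mathbf{e}_q}}1_{\{\tau_y^-<\mathbf{e}_q\}}]/\mathbb{E}[e^{\beta\underline{X}_{\mathbf{e}_q}}]$. By spatial homogeneity of $X$, under $\mathbb{P}_x$ the pair $(\underline{X}_{\mathbf{e}_q}, 1_{\{\tau_y^- < \mathbf{e}_q\}})$ has the same law as $(x + \underline{X}_{\mathbf{e}_q}, 1_{\{-\underline{X}_{\mathbf{e}_q} > x - y\}})$ evaluated under $\mathbb{P} = \mathbb{P}_0$, using the event equivalence of the first step in the form $\{x + \underline{X}_{\mathbf{e}_q} < y\}$. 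Pulling the factor $e^{\beta x}$ out of the numerator then produces exactly the stated right-hand side.

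I expect the main obstacle to lie in the second paragraph: justifying the factorisation cleanly. One must invoke the strong Markov property and the lack-of-memory property simultaneously on the event $\{\tau_y^- < \mathbf{e}_q\}$, taking care that the residual clock and the post-passage infimum $I$ are genuinely independent of $\mathcal{F}_{\tau_y^-}$ and of that event itself, and that the decomposition $\underline{X}_{\mathbf{e}_q} = X_{\tau_y^-} + I$ is exact, which relies on both $\underline{X}_{\tau_y^-} = X_{\tau_y^-}$ and $I \le 0$. The remaining steps are routine once these event identifications and the spatial shift are handled carefully.
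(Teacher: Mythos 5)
Your proposal is correct and follows essentially the same route as the paper's proof: both hinge on the identity $\{\tau_y^-<\mathbf{e}_q\}=\{\underline{X}_{\mathbf{e}_q}<y\}$, the observation $\underline{X}_{\tau_y^-}=X_{\tau_y^-}$, and the factorisation of $\mathbb{E}[e^{\beta\underline{X}_{\mathbf{e}_q}}1_{\{\tau_y^-<\mathbf{e}_q\}}]$ via the strong Markov property and the lack-of-memory property of $\mathbf{e}_q$. The only (cosmetic) difference is that you start from the left-hand side and make the final spatial shift to $\mathbb{P}_x$ explicit, whereas the paper starts from the infimum functional and leaves that translation implicit.
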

\begin{proof}
Since $1_{\{\underline{X}_{\mathbf{e}_q}<y\}}=1_{\{\tau_y^-<\mathbf{e}_q\}}$ we get by conditioning on $\mathcal{F}_{\tau_y^-}$
\begin{eqnarray*}
\mathbb{E}\left[e^{\beta \underline{X}_{\mathbf{e}_q}}1_{\{\underline{X}_{\mathbf{e}_q}<y\}}\right]
&=&\mathbb{E}\left[e^{\beta \underline{X}_{\mathbf{e}_q}}1_{\{\tau_y^-<\mathbf{e}_q\}}\right]\\
&=&\mathbb{E}\left[1_{\{\tau_y^-<\mathbf{e}_q\}}e^{\beta X_{\tau^-_y}}\mathbb{E}\left[e^{\beta ( \underline{X}_{\mathbf{e}_q}-X_{\tau^-_y})}\left|\mathcal{F}_{\tau_y^-}\right.\right]\right].
\end{eqnarray*}

On the event $\{\tau_y^-<\mathbf{e}_q\}$ and given  $\mathcal{F}_{\tau_y^-}$ it holds that
\begin{eqnarray*}
\underline{X}_{\mathbf{e}_q}-X_{\tau^-_y}&=&\inf_{0\leq s\leq \mathbf{e}_q}(X_s-X_{\tau_y^-})\\
&=&\min\left(X_{\tau_y^-},\inf_{\tau_y^-\leq s\leq \mathbf{e}_q}X_s\right)-X_{\tau_y^-}\\
&=&\inf_{\tau_y^-\leq s\leq \mathbf{e}_q}(X_s-X_{\tau_y^-})\\
&\stackrel{\mathrm{d}}{=}&\inf_{0\leq s\leq \mathbf{e}_q-\tau_y^-}\tilde{X}_s,\\
 \end{eqnarray*}
 and the fact that $\underline{X}_{\tau_y^-}=X_{\tau_y^-}$. Here $\tilde{X}$ denotes an independent copy of $X$.
Furthermore, due to the lack of memory property of the exponential distribution ($\mathbb{P}(\mathbf{e}_q-s>t|\mathbf{e}_q>s)=\mathbb{P}(\mathbf{e}_q >t)$) it follows that
on $\{\tau_y^-<\mathbf{e}_q\}$ we have $\inf_{0\leq s\leq \mathbf{e}_q-\tau_y^-}X_{s}= \underline{X}_{\mathbf{e}_q-\tau_y^-} \stackrel{\mathrm{d}}{=}\underline{X}_{\mathbf{e}_q}.$
Hence
\begin{eqnarray*}\mathbb{E}\left[e^{\beta \underline{X}_{\mathbf{e}_q}}1_{\{\underline{X}_{\mathbf{e}_q}<y\}}\right]
&=&\mathbb{E}\left[1_{\{\tau_y^-<\mathbf{e}_q\}}\e^{\beta X_{\tau^-_y}}\right]\mathbb{E}\left[e^{\beta \underline{X}_{\mathbf{e}_q}}\right]\\
&=&\mathbb{E}\left[e^{-q\tau_y^-+\beta X_{\tau^-_y}}\right]\mathbb{E}\left[e^{\beta \underline{X}_{\mathbf{e}_q}}\right]
\end{eqnarray*}
\end{proof}
\begin{proof}[Proof of Theorem 1]
The proof will follow the three steps as set out in the introduction.
Classical theory of optimal stopping (see for example Corollary 2.9 in \cite{PS}) implies that an optimal stopping time $\tau^*(x)$ exists and is of the form
\[\tau^*(x)=\inf\{t\geq 0: X_t\in D\}\quad \mbox{under }\mathbb{P}_x\]
with $D=\{y\in\mathbb{R}: V(y)=(K-e^y)^+\}$. Since $(K-x)^+$ is convex it follows that $V(\log x)$ is convex well. As $V(\log K)>0$ (consider $\tau=\inf\{t: X_t\leq \log K/2\}$) and $q>0$ this implies that $D=(-\infty, y^*]$ for some $y^*<\log K$.
For $y<\log K$ denote by $V(x,y)$ the expected pay-off corresponding to $\tau^-_y$ when $X_0=x$, i.e.
 \[V(x,y):=\mathbb{E}_x[e^{-q\tau_y^-}\max(K-e^{X_{\tau_y^-}},0)].\]
 From Lemma 1 it follows that
\[V(x,y)=\frac{\mathbb{E}\left[\left(K\mathbb{E}[e^{ \underline{X}_{\mathbf{e}_q}}]-e^{x+\underline{X}_{\mathbf{e}_q}}\right)
1_{\{-\underline{X}_{\mathbf{e}_q}>x-y\}}\right]}{\mathbb{E}[e^{ \underline{X}_{\mathbf{e}_q}}]}
\]
 As we are looking for an optimal choice of $y$ (which we know is independent of $x$) we are looking to maximise
\[\mathbb{E}\left[\left(\mathbb{E}[e^{ \underline{X}_{\mathbf{e}_q}}]-e^{\underline{X}_{\mathbf{e}_q}}\right)
1_{\{Ke^{\underline{X}_{\mathbf{e}_q}}<e^{y}\}}\right].
\]
To maximise this expected value the indicator should be $1$ precisely when the random variable in round brackets is positive. This implies that to $\exp(y^*)= K\mathbb{E}[e^{ \underline{X}_{\mathbf{e}_q}}]$.
\end{proof}
\begin{rem}
For specific classes of L\'evy processes the expression for $x^*$ and the value function become more explicit. For example, see \cite{avram} and \cite{Chan} when the L\'evy process is assumed to have no positive jumps.
\end{rem}

\section{The Novikov--Shiryaev optimal stopping problem}
Next, we consider the pay-off function $G(x)=(x^+)^\nu$ with $\nu>0$, i.e.
\begin{equation}
V(x)=\sup_{\tau\in\mathcal{T}}\mathbb{E}_x[e^{-q\tau} (X_\tau^+)^\nu].\label{NS}
\end{equation}
with $q>0$.
 We shall assume throughout this section that
\begin{equation}
\int_{(1,\infty)} x^\nu\, \Pi(dx)<\infty\label{condnu}
\end{equation}
where $\Pi$ denotes the L\'evy measure of $X$.
This condition is sufficient to guarantee that $\mathbb{E}[\overline{X}^\nu_{\mathbf{e}_q}]<\infty$.

The Novikov--Shiryaev optimal stopping problem was first solved in both a random walk and L\'evy process case in \cite{Novikovshiryaev2} in which the authors extended their results from the setting $\nu\in\mathbb{N}$ in \cite{Novikovshiryaev}. The proof is based is based on a verification lemma.  Note that for a general L\'evy processes (\ref{NS}) was solved in \cite{Surya} for $\nu\in\mathbb{N}$ again using a verification lemma. See also \cite{Salminen}.

The solution to this optimal stopping problems is given in terms of the so-called Appell functions. Here we mention some of their properties without proof and refer to \cite{Novikovshiryaev2} for further details. Appell functions can be defined inductively. For $y>0$ and $s<0$ define
\[Q_s(y)=\frac{1}{\Gamma(-s)}\int_0^\infty u^{-s-1}\frac{e^{-u y}}{\mathbb{E}[e^{-u\overline{X}_{\mathbf{e}_q}}]}\,du\]
and let $Q_0(y)=1$ for any $y>1$.
Then for $s\in(0,\nu)$ we define $Q_s(y)$ via
\[\frac{d}{dx} Q_s(x)=s Q_{n-1}(x)\]
and $\mathbb{E}[Q_s(\overline{X}_{\mathbf{e}_q})]=0$. This expectation can be shown to be finite because of assumption (\ref{condnu}). It follows then that
\begin{equation}\mathbb{E}_x[Q_s(\overline{X}_{\mathbf{e}_q})]=x^s.\label{mean}\end{equation}
We are now ready to state the solution  to (\ref{NS}) as in \cite{Novikovshiryaev2}.
\begin{theorem}
Let $\nu>0$ and suppose $X$ is a L\'evy process satisfying (\ref{condnu}). Then an optimal stopping time for (\ref{NS}) is given by
\[\tau_{a(\nu)}^+=\inf\{t>0: X_t>a(\nu)\}\]
where $a(\nu)$ denotes the positive solution to the equation $Q_\nu(x)=0$.\end{theorem}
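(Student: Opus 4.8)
The plan is to follow the same three steps as in the proof of Theorem 1, now passing \emph{upwards} and replacing the exponential payoff by the Appell function $Q_\nu$. For Step 1, Corollary 2.9 in \cite{PS} again guarantees an optimal stopping time of the form $\tau^*(x)=\inf\{t\ge 0:X_t\in D\}$ under $\mathbb{P}_x$, with $D=\{y\in\mathbb{R}:V(y)=(y^+)^\nu\}$. Since under $\mathbb{P}_x$ the process is distributed as $x+X$ under $\mathbb{P}_0$ and $G(x)=(x^+)^\nu$ is nondecreasing, $V$ is nondecreasing; and because the paths of $X$ are not monotone, $V(x)>0=G(x)$ for every $x\le 0$, so $D\subseteq(0,\infty)$. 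I would then argue that $D$ is an up-set, i.e.\ $D=[a,\infty)$ for some $a>0$, so that the optimal rule is a first passage time $\tau_a^+=\inf\{t>0:X_t>a\}$, and it remains only to identify the best level $a$.

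For Step 2, fix $a>0$ and write $V(x,a)=\mathbb{E}_x[e^{-q\tau_a^+}(X_{\tau_a^+}^+)^\nu 1_{\{\tau_a^+<\infty\}}]$; since $X_{\tau_a^+}>a>0$ on $\{\tau_a^+<\infty\}$ the positive part may be dropped. I would compute this exactly as in Lemma \ref{Markov}, but with the supremum in place of the infimum and with $Q_\nu$ in place of $e^{\beta\,\cdot}$. Splitting according to whether $\tau_a^+<\mathbf{e}_q$ or not, and using that on $\{\tau_a^+<\mathbf{e}_q\}$ one has $X_{\tau_a^+}=\overline{X}_{\tau_a^+}$ together with the strong Markov property and the lack of memory of $\mathbf{e}_q$, the supremum decomposes as $\overline{X}_{\mathbf{e}_q}\stackrel{\mathrm{d}}{=}X_{\tau_a^+}+\tilde{\overline{X}}_{\mathbf{e}_q}$ for an independent copy $\tilde X$. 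The mean-value property (\ref{mean}) gives $\mathbb{E}[Q_\nu(z+\overline{X}_{\mathbf{e}_q})]=z^\nu$, so that
\[\mathbb{E}_x[Q_\nu(\overline{X}_{\mathbf{e}_q})1_{\{\tau_a^+<\mathbf{e}_q\}}]=\mathbb{E}_x[X_{\tau_a^+}^\nu 1_{\{\tau_a^+<\mathbf{e}_q\}}]=V(x,a),\]
where the last identity uses that $\mathbf{e}_q$ is independent (so conditioning reinstates the factor $e^{-q\tau_a^+}$) and that $X_{\tau_a^+}>a>0$. Since $\{\tau_a^+<\mathbf{e}_q\}=\{\overline{X}_{\mathbf{e}_q}>a\}$ under $\mathbb{P}_x$, this yields the clean expression
\[V(x,a)=\mathbb{E}_x[Q_\nu(\overline{X}_{\mathbf{e}_q})1_{\{\overline{X}_{\mathbf{e}_q}>a\}}].\]

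For Step 3, exactly as in Theorem 1, to maximise $V(x,a)$ over $a$ the indicator should equal $1$ precisely on the event where the integrand $Q_\nu(\overline{X}_{\mathbf{e}_q})$ is positive. Using the property of Appell functions that $Q_\nu$ has a unique positive root $a(\nu)$ with $Q_\nu\le 0$ to its left and $Q_\nu\ge 0$ to its right, the up-set $\{\overline{X}_{\mathbf{e}_q}>a\}$ matches $\{Q_\nu>0\}$ exactly when $a=a(\nu)$; a direct comparison (adding or removing the band between $a$ and $a(\nu)$, on which $Q_\nu$ has a definite sign) confirms that $a=a(\nu)$ is the maximiser, independently of $x$. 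Hence $\tau_{a(\nu)}^+$ is optimal.

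I expect Step 1 to be the main obstacle. Unlike the McKean problem, the payoff $(x^+)^\nu$ is convex only for $\nu\ge 1$ and concave for $\nu\le 1$, so the convexity argument used there does not transfer uniformly. For $\nu\le 1$ the concavity of $G$ lets a near-optimal-stopping-time comparison show that $V-G$ is nonincreasing, whence $\{V>G\}$ is a down-set and $D=[a,\infty)$; for $\nu\ge 1$ one must instead combine the convexity of $V$ with the fact that immediate stopping is optimal for all sufficiently large $x$, which is more delicate. By contrast Step 2 is a direct transcription of the proof of Lemma \ref{Markov} powered by (\ref{mean}), and Step 3 rests only on the sign behaviour of $Q_\nu$ quoted from \cite{Novikovshiryaev2}.
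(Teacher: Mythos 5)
Your Steps 2 and 3 reproduce the paper's proof almost verbatim: the identity $V(x,a)=\mathbb{E}_x[Q_\nu(\overline{X}_{\mathbf{e}_q})1_{\{\overline{X}_{\mathbf{e}_q}\ge a\}}]$ is exactly the paper's (\ref{maxthis}), obtained by the same supremum analogue of Lemma~\ref{Markov} combined with (\ref{mean}), and the optimisation over $a$ via the sign behaviour of $Q_\nu$ quoted from \cite{Novikovshiryaev2} is identical. The genuine gap is precisely where you predicted it, in Step 1, and neither of the substitutes you sketch closes it. For $\nu\le 1$ the payoff $G(u)=(u^+)^\nu$ is not concave on $\mathbb{R}$ (it vanishes on $(-\infty,0]$ and then rises steeply), and even restricting to the concave part the suboptimality bound only gives $V(y)-V(x)\le\sup_z\bigl(G(y+z)-G(x+z)\bigr)=(y-x)^\nu$, attained at $z=-x$; since $(y-x)^\nu\ge y^\nu-x^\nu$ by subadditivity, this does \emph{not} show that $V-G$ is nonincreasing. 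For $\nu\ge 1$ you offer no argument at all.

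The idea you are missing, which is how the paper handles all $\nu>0$ uniformly, is to compare the \emph{ratio} $V/G$ rather than the difference $V-G$, following \cite{Christensen}. Since $\tau^*(y)$ is admissible but in general suboptimal for the problem started at $x$, one gets for $y>x>0$
\[
\frac{V(y)}{y^\nu}-\frac{V(x)}{x^\nu}\;\le\;\mathbb{E}\left[e^{-q\tau^*(y)}\left(\left(\left(1+\frac{X_{\tau^*(y)}}{y}\right)^+\right)^{\nu}-\left(\left(1+\frac{X_{\tau^*(y)}}{x}\right)^+\right)^{\nu}\right)\right]\;\le\;0,
\]
the second inequality coming from the pointwise comparison of $u\mapsto((1+z/u)^+)^\nu$ at $u=y$ and $u=x$ for the overshoot $z=X_{\tau^*(y)}$. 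The homogeneity of $(x^+)^\nu$ is what makes the two terms comparable after dividing by $y^\nu$ and $x^\nu$, with no case distinction on $\nu$. Since $V/G\ge 1$ everywhere on $(0,\infty)$ and $V/G=1$ exactly on $D$, the monotonicity of $V/G$ forces $y\in D$ whenever $x\in D$ and $y>x$, i.e.\ $D=[a^*,\infty)$. With that in hand your Steps 2 and 3 complete the proof exactly as in the paper.
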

\begin{proof}
Again, instead of applying a verification lemma we use a direct approach. We invoke the general theory of optimal stopping to conclude that due to assumption (\ref{condnu}) there exists an optimal stopping time which is given by the first hitting time of the set
\[D=\{x\in\mathbb{R}: V(x)=(x^+)^\nu\}.\]
Note that $D\subset\mathbb{R}^+$ and that $D\neq\emptyset$, again due to (\ref{condnu}).
We follow arguments similar to those in \cite{Christensen} to deduce that for $y>x>0$ (and making explicit the dependence of optimal stopping times on the starting point and using that $\tau^*(y)$ may not be optimal under $\mathbb{P}_x$) it holds that
\begin{eqnarray*}
\frac{V(y)}{(y^+)^\nu}-\frac{V(x)}{(x^+)^\nu}&\leq &\mathbb{E}\left[e^{-q\tau^*(y)}\left(\frac{((y+X_{\tau^*(y)})^+)^\nu}{y^\nu}-\frac{(x+X_{\tau^*(y)})^+)^\nu}{x^\nu}\right)\right]\\
&\leq &\mathbb{E}\left[e^{-q\tau^*(y)}\left(((1+X_{\tau^*(y)}/y)^+)^\nu-((1+X_{\tau^*(y)}/x)^+)^\nu\right)\right]\leq 0.
\end{eqnarray*}
Hence, for any $x\in D$ we have that $V(x)=x^\nu$ and thus also that $y\in D$ when $y>x$. Therefore we conclude that $D=[a^*,\infty)$ for some $a^*>0$.  The strong Markov property, stationarity and independence of increments and (\ref{mean}) now lead to
\begin{equation}\mathbb{E}_x[Q_\nu(\overline{X}_{\mathbf{e}_q)}1_{\{\overline{X}_{\mathbf{e}_q}\geq a\}}]=\mathbb{E}_x[e^{-q\tau_a^+}X_{\tau_a^+}^\nu1_{\{\tau_a^+<\infty\}}]\label{maxthis}\end{equation}
for any $a>0$, where $\overline{X}_t=\sup_{0\leq s\leq t}X_s.$ It suffices now to maximise this over $a$ to find the optimal stopping set $D$.
For this, we refer to Lemma 1 in \cite{Novikovshiryaev2} which states that for each $\nu>0$ there exists $a(\nu)$ such that $Q_\nu(x)\leq 0$ for $0<x<a(\nu), Q_\nu(a(\nu))=0$ and $Q_\nu(x)$ is increasing for $x>a(\nu)$. Just as in the case of the McKean optimal stopping problem, maximising (\ref{maxthis}) over $a$ is now straightforward as we should choose $a$ such that the indicator function is equal to 1 only when $\overline{X}_{\mathbf{e}_q}$ is such that $Q(\overline{X}_{\mathbf{e}_q})\geq 0$, i.e. we should choose $a^*=a(\nu)$.
\end{proof}
\begin{rem}
In \cite{Novikovshiryaev2} the authors also consider the payoff function
 $G(x)=1-e^{-x^+}$ (see also Exercise 11.2 in \cite{ky}).
 For this payoff function we also readily deduce that for $y>x>0$ (stopping is not optimal when $X_t<0$)
 \begin{eqnarray*}
 V(y)-V(x)&\leq & \mathbb{E}[e^{-q\tau^*(y)}( G(y+X_{\tau^*(y)})-G(x+X_{\tau^*(y)}))]\\
 &=& \mathbb{E}[e^{-q\tau^*(y)}(e^{-(x+X_{\tau^*(y)})^+}-e^{-(y+X_{\tau^*(y)})^+})]\\
 &\leq &e^{-x}-e^{-y},
  \end{eqnarray*}
 from which it follows that the stopping region is again of the form $[x^*,\infty)$. The equivalent of Lemma 1 now is
 \[\mathbb{E}_x\left[e^{-q\tau_a^+}\left(1-e^{-X_{\tau_a^+}}\right)1_{\{\tau_a^+<\infty\}}\right]=\mathbb{E}_x\left[\left(1-\frac{e^{-\overline{X}_{\mathbf{e}_q}}}{\mathbb{E}[e^{-\overline{X}_{\mathbf{e}_q}}]}\right)1_{\{\overline{X}_{\mathbf{e}_q}\geq a\}}\right]\]
 from which we deduce immediately that $x^*=-\log(\mathbb{E}[e^{-\overline{X}_{\mathbf{e}_q}}])$.
\end{rem}
\section{The Shepp--Shiryaev optimal stopping problem}
The Shepp--Shiryaev optimal stopping problem (or Russian option) is given by
\begin{equation}V(x)=\sup_{\tau\in\mathcal{T}}\mathbb{E}[e^{-q\tau+(\overline{X}_\tau\vee x)}].\label{ss}\end{equation}
Here we assume that
$\psi(1):=\log(\mathbb{E}[e^{X_1}])<\infty$  and $q>(\psi(1)\vee 0)$ so that $V$ will be finite.
It was proposed and solved first for a Brownian motion in \cite{Shepp1}. Later, in \cite{Shepp2} an alternative method was described which was based on a change of measure
\[\left.\frac{d\mathbb{P}^1}{d\mathbb{P}}\right|_{\mathcal{F}_t}=e^{X_t-\psi(1)t}\]
under which (\ref{ss}) is transformed into an optimal stopping problem for the one-dimensional strong Markov process $Y_t^x=(x\vee \overline{X}_t)-X_t$ as
\begin{equation}
V(x)=\sup_{\tau\in\mathcal{T}}\mathbb{E}^1[e^{-(q-\psi(1))\tau+Y_\tau^x}],\label{ss2}\end{equation}
where $\mathbb{E}^1$ denotes the expectation under $\mathbb{P}^1$.
Similar to the previous cases we can show that the stopping set (for $Y$) is of the form $[z,\infty)$ for some $z\geq 0$. Indeed, for $y>x>0$ we get
\begin{eqnarray*}
V(y)-V(x)&\leq &\mathbb{E}\left[e^{-q\tau^*(y)}\left(e^{y\vee\overline{X}_{\tau^*(y)}}-e^{x\vee\overline{X}_{\tau^*(y)}}\right)\right]\\
&\leq &e^y-e^x.
\end{eqnarray*}
To get an expression for the expected payoff corresponding to stopping times $\tau_x^+$  the method we used earlier does not work now since the reflected process does not have independent increments. Instead we shall consider spectrally negative L\'evy processes (i.e. those with no positive jumps and the paths of which are not monotone). When $X$ is of bounded variation we shall denote the drift by $\mathrm{d}$. When $q\geq \mathrm{d}$, (\ref{ss}) is trivial since in this case $-qt+(\overline{X}_t\vee x)$ is a decreasing process and hence stopping immediately is optimal. Therefore we shall assume
\begin{equation}q<\mathrm{d}\quad\mbox{ when $X$ is of bounded variation. }\label{drift}\end{equation}
Scale functions are ubiquitous when it comes to fluctuation theory for spectrally negative L\'evy processes. For $r\geq 0$ define the scale function $W^{(r)}(x)$ on $[0,\infty)$ as the unique continuous function such that
\[\int_0^\infty e^{-\lambda x}W^{(r)}(x)\,dx=\frac{1}{\psi(\lambda) -r}\]
for $\lambda\geq 0$ such that $\psi(\lambda)>r$. We set $W^{(r)}(x)=0$ for $x<0$. Furthermore, define
$Z^{(r)}(x)=1+r\int_0^x W^{(r)}(y)\,dy.$ The following result was proved in \cite{avram2}, again using a verification lemma.
\begin{theorem}
Let $X$ be a spectrally negative L\'evy process satisfying (\ref{drift}) and let $q>\psi(1)\vee 0$.
The stopping set for (\ref{ss}) is given by $[x^*,\infty)$ where $x^*$ is the unique solution to the equation $Z^{(q)}(x)=qW^{(q)}(x)$ and
\[V(x)=e^{x}Z^{(q)}(x^*-x).\]
\end{theorem}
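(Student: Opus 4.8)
The plan is to execute the three steps of the introduction, the first of which is already supplied by the passage preceding the statement: for the reflected process $Y_t^x=(x\vee\overline X_t)-X_t$ the stopping region is of the form $[x^*,\infty)$, so it remains only to identify the single threshold $x^*$. Accordingly, for $z\ge 0$ set $\tau_z=\inf\{t>0:Y_t^x\ge z\}$ and write $V(x,z)=\mathbb{E}[e^{-q\tau_z+(\overline X_{\tau_z}\vee x)}]$ for the payoff of this threshold rule. The task is then to compute $V(x,z)$ in closed form for a spectrally negative $X$ and to maximise it over $z$; I would work directly under $\mathbb{P}$ with discount $q$, so that the $q$-scale functions appear naturally.

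For Step 2 I would exploit the absence of positive jumps. Starting from drawdown $Y_0^x=x$ (with $X_0=0$), the running maximum remains frozen at $x$ until $X$ either climbs up to $x$ --- which, since $X$ creeps upward, starts a new maximum and resets $Y$ to $0$ --- or first falls to $x-z$, at which instant $Y$ reaches $z$ and we stop. This is exactly a two-sided exit of $X$ from an interval of width $z$, and the classical scale-function identities give $Z^{(q)}(z-x)-Z^{(q)}(z)W^{(q)}(z-x)/W^{(q)}(z)$ for the discounted downward exit (on which we stop, the maximum still being $x$) and $W^{(q)}(z-x)/W^{(q)}(z)$ for the upward one (on which a new maximum forms). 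On the latter event the strong Markov property lets me restart from drawdown $0$ at the raised maximum, and since the law of the future increments of $\overline X$ does not depend on its current level this contributes $e^{x}\phi(z)$, where $\phi(z)=V(0,z)$ is the payoff of the same rule started at the maximum. Assembling the two contributions yields
\[V(x,z)=e^x\Big[Z^{(q)}(z-x)+\frac{W^{(q)}(z-x)}{W^{(q)}(z)}\big(\phi(z)-Z^{(q)}(z)\big)\Big].\]

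The genuinely hard part is obtaining $\phi(z)=V(0,z)$, because once the maximum is allowed to grow the process $Y$ is reflected at its supremum and no longer has independent increments. Here I would turn to excursion theory of $Y$ away from $0$: normalising the local time at the maximum by $\overline X$ itself, the excursions below the maximum form a Poisson point process, $\overline X$ increases on the local-time set, and $\tau_z$ is triggered by the first excursion reaching depth $z$. A renewal argument over local time, combined with the reflected first-passage identity $\mathbb{E}_0[e^{-q\tau_z}]=1/Z^{(q)}(z)$ together with the corresponding discounted excursion-measure quantities, should pin down $\phi(z)$ as an explicit combination of $W^{(q)}(z)$ and $Z^{(q)}(z)$. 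I expect this excursion-theoretic computation --- converting the discounted rate of deep excursions and the compensating growth of the maximum into scale functions --- to be the main obstacle, since it is precisely the ingredient that the independence-of-increments shortcut of the earlier sections cannot deliver.

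For Step 3, choosing the convenient value $x=0$ collapses the optimisation to maximising $V(0,z)=\phi(z)$ over $z$; assumption (\ref{drift}) ensures the problem is non-degenerate so that an interior maximiser $x^*>0$ exists. Differentiating and using $\tfrac{d}{dz}Z^{(q)}=qW^{(q)}$, the first-order condition should reduce to $Z^{(q)}(x^*)=qW^{(q)}(x^*)$, with uniqueness following from the monotonicity of $z\mapsto Z^{(q)}(z)-qW^{(q)}(z)$, which is strictly positive at $0$ by (\ref{drift}). Finally, substituting $z=x^*$ into the expression for $V(x,z)$ --- at which point the bracketed correction vanishes because $\phi(x^*)=Z^{(q)}(x^*)$ --- recovers $V(x)=e^{x}Z^{(q)}(x^*-x)$, consistent with immediate stopping on $\{x\ge x^*\}$, where $Z^{(q)}(x^*-x)=1$ and hence $V(x)=e^{x}$.
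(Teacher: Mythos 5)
Your overall strategy coincides with the paper's: take the threshold form of the stopping region as established, compute the value $V(x,z)$ of the rule $T^x_z$ in terms of scale functions, and optimise over $z$ at the convenient choice $x=0$. Your two-sided exit decomposition of $V(x,z)$ into a ``stop on downward exit'' term and a ``restart at the new maximum'' term weighted by $W^{(q)}(z-x)/W^{(q)}(z)$ is correct and is consistent with the formula the paper uses. However, there is a genuine gap at exactly the point you flag as the main obstacle: you never actually determine $\phi(z)=V(0,z)$. The identity you write for $V(x,z)$ is, at $x=0$, the tautology $\phi(z)=\phi(z)$, so the decomposition alone cannot close the loop; some additional input about the reflected process is indispensable. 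The paper supplies it by citing the first-passage identity of Avram--Kyprianou--Pistorius,
\[
V(x,z)=e^x\left(Z^{(q)}(z-x)-W^{(q)}(z-x)\,\frac{q W^{(q)}(z)-Z^{(q)}(z)}{W^{(q)\prime}(z)-W^{(q)}(z)}\right),
\]
which identifies $\phi(z)-Z^{(q)}(z)$ as $-W^{(q)}(z)\bigl(qW^{(q)}(z)-Z^{(q)}(z)\bigr)/\bigl(W^{(q)\prime}(z)-W^{(q)}(z)\bigr)$. Your excursion-theoretic sketch is a plausible route to reprove this, but as written it is a statement of intent, not a proof.

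The missing formula also undermines Step 3 as you present it. Without an explicit $\phi$, the assertion that ``the first-order condition should reduce to $Z^{(q)}(x^*)=qW^{(q)}(x^*)$'' is not verifiable, and a first-order condition by itself would in any case only locate a critical point. The paper computes
\[
f'(z)=\frac{\bigl(Z^{(q)}(z)-qW^{(q)}(z)\bigr)\bigl((W^{(q)\prime}(z))^2-W^{(q)}(z)W^{(q)\prime\prime}(z)\bigr)}{\bigl(W^{(q)\prime}(z)-W^{(q)}(z)\bigr)^2}
\]
and uses the log-concavity of $W^{(q)}$ to conclude that the second factor is nonnegative, so that $f'$ has the sign of $g(z)=Z^{(q)}(z)-qW^{(q)}(z)$; together with $g(0)>0$ (which uses (\ref{drift})), $g'<0$ and $g(\infty)=-\infty$ this gives a unique root $x^*$ and shows $f$ increases before it and decreases after it, i.e.\ a global maximum. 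Your final observations --- that the correction term vanishes at $z=x^*$ so that $V(x)=e^xZ^{(q)}(x^*-x)$, and the consistency check for $x\ge x^*$ --- are correct once the formula is in hand, but to complete the proof you must either import the reflected first-passage identity as the paper does or genuinely carry out the excursion computation you outline.
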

\begin{proof}
Having already established that the stopping set is of the form $[x,\infty)$, it suffices now to maximise the the expected payoff corresponding to first passage times $T^x_z=\inf\{t>0 : Y_t^x>z\}.$ In \cite{avram2} it was shown that
\[V(x,z):=\mathbb{E}^{(1)}[e^{-q T_z^x+Y^x_{T_z^x}}]=e^x\left(Z^{(q)}(z-x)-W^{(q)}(z-x)\frac{q W^{(q)}(z)-Z^{(q)}(z)}{W^{(q)^\prime}(z)-W^{(q)}(z)}\right).\]
Here we have implicitly assumed that the L\'evy measure of $X$ has no atoms when $X$ is of bounded variation, since otherwise $W^{(q)}$ will be differentiable only almost everywhere and we would have to resort to left derivatives instead.

To maximise over $z$ we are free to choose $x=0$ We find
\[f(z):=V(0,z)=\frac{Z^{(q)}(x)W^{(q)^\prime}(z)-q(W^{(q)}(z))^2}{W^{(q)^\prime}(z)-W^{(q)}(z)}.\]
For notational convenience we assume that $W^{(q)}$ is twice differentiable on $(0,\infty)$ (note that from (8.22) and (8.23) in \cite{ky} it follows that $W^{(q)}(x)$ is log-concave and hence twice differentiable almost everywhere. In fact, $W^{(q)}$ is a $C^2$ function on $(0,\infty)$ when $X$ has a Gaussian component, see \cite{kuz}). We then find that
\[f'(z)=\frac{(Z^{(q)}(z)-qW^{(q)}(z))(W^{(q)^\prime}(x)-W^{(q)}(z)W^{(q)^{\prime\prime}}(z))}{(W^{(q)^\prime}(z)-W^{(q)}(z))^2}.\]
Under the assumption (\ref{drift}) and $q>\psi(1)$ it holds that $g(z):=Z^{(q)}(z)-qW^{(q)}(z)$ satisfies $g(0)>0, g'(z)<0$ and $g(\infty)=-\infty$ so there is a unique $x^*$ such that $g(x^*)=0$. The log-concavity of $W^{(q)}$ allows us to deduce that $f'(z)\geq 0$ for $x<x^*$ and $f'(z)\leq 0$ for $x>x^*$.  It is therefore optimal to choose $z=x^*$ leading to $V(x)=e^x Z^{(q)}(x^*-x)$.
\end{proof}

\end{document}